\documentclass[12pt]{amsart}
\usepackage{enumerate}
\usepackage{amssymb}
\usepackage{graphicx}
\usepackage{amscd, color}
\usepackage{amsmath}
\usepackage{amsfonts}
\usepackage[english]{babel}
\usepackage{epsfig}
\usepackage{xypic}
\usepackage{color}

\newtheorem{theorem}{Theorem}

\newtheorem{corollary}[theorem]{Corollary}
\newtheorem{example}[theorem]{Example}

\newtheorem{remark}[theorem]{Remark}

\newtheorem{lemma}[theorem]{Lemma}

\begin{document}

\title[Compact convex sets in $2$-D asymmetric normed lattices]{Compact convex sets in $2$-dimensional asymmetric normed lattices}

\author{N. Jonard-P\'erez and  E.A. S\'anchez-P\'erez}

\subjclass[2010]{46A50, 46A55, 46B50, 52A07, 52A10}

\keywords{Asymmetric norm, Asymmetric lattice norm, Compactness,  Convex body, Convex set}

\thanks{The first author has been supported by CONACYT (Mexico) under grant 204028.
The second author has been supported by the Ministerio de Econom\'{\i}a y Competitividad (Spain) under grant
 MTM2012-36740-C02-02.}

\maketitle

\begin{abstract} In this note,
we study the geometric structure of  compact convex  sets in $2$-dimensional asymmetric normed lattices.  We prove that every $q$-compact convex set is strongly $q$-compact and we give a complete geometric description of the compact convex set with non empty interior in $(\mathbb R^{2}, q)$, where $q$ is an asymmetric lattice norm.
\end{abstract}

\section{Introduction}

A  direct consequence of the Heine-Borel Theorem is that convex compact sets of the one dimensional Hausdorff linear spaces are the closed bounded intervals.
However, compact sets of the one-dimensional, non-Hausdorff, real asymmetric normed linear space $(\mathbb R, | \cdot|_a)$  have a completely different structure. The asymmetric norm $| \cdot |_a$ is defined by
$$
| r |_a := \max\{r,0\}, \quad r \in \mathbb R.
$$
This asymmetric norm  induces a topology on $\mathbb R$ in which the basic open sets are $B_\varepsilon(r)=\{y \in \mathbb R: y-r < \varepsilon \}$. It can be easily seen that in this space each convex compact set can be written either as an interval
$(a,b]$ or as an interval $[a,b]$, for $- \infty \le a \le b < \infty$ (see Corollary 4.3 in \cite{new}).

In the symmetric case, it is well-known that any two $n$-dimensional compact convex bodies are homeomorphic. Furthermore, any homeomorphism between two compact convex bodies $A$ and $B$ of the same dimension maps the interior of $A$ onto the interior of $B$ and the boundary of $A$ onto the boundary of $B$.

On the other hand, it is well known that every closed convex subset $K$ of a finite dimensional Banach space is homeomorphic to either  $[0,1]^{n}\times[0,\infty)$  or $ [0,1]^{n}\times \mathbb{R}^{m}$ for some  $n,m \in \mathbb{N}$ (see \cite{bespel}). In particular, if $K$ is unbounded and  has no lines, then it is homeomorphic to $[0,1]^{n}\times[0,\infty)$.
In the special case of $\mathbb R^{2}$ we have the following theorem (see \cite[Chapter III, \S 6]{bespel}):

\begin{theorem}\label{t:convexos dimension 2}
Let $K_1$ and $K_2$ be two closed convex subsets of $\mathbb R^{2}$.
Additionally, suppose that both sets have non-empty interior and none of them  contains  a line. If $K_1$ and $K_2$ are unbounded, then there exists an homeomorphism $f:\mathbb R^{2}\to \mathbb R^{2}$ sending $K_1$ onto $K_2$, the interior of $K_1$ onto the interior of $K_2$, and the boundary of $K_1$ onto the boundary of $K_2$.
\end{theorem}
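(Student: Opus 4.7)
The strategy is to reduce everything to one fixed model: the closed first quadrant $M := [0,\infty)^{2}$. I will argue that every set $K$ satisfying the hypotheses admits a self-homeomorphism $f_{K}:\mathbb{R}^{2}\to\mathbb{R}^{2}$ with $f_{K}(K) = M$. Since any homeomorphism of $\mathbb{R}^{2}$ onto itself automatically sends the topological interior and boundary of a subset onto those of its image, the composition $f_{K_{2}}^{-1}\circ f_{K_{1}}$ is the desired homeomorphism.

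The construction of $f_{K}$ is guided by the recession cone $C(K)$. Since $K$ is unbounded, $C(K)\neq\{0\}$; since $K$ contains no line, $C(K)$ is pointed. Hence $C(K)$ is either a single ray or a pointed two-dimensional closed convex cone. An affine automorphism of $\mathbb{R}^{2}$, which itself respects interior and boundary of every subset, then normalises $C(K)$ to either $\{0\}\times[0,\infty)$ or to $[0,\infty)^{2}=C(M)$.

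In the two-dimensional cone case, fix $p\in\operatorname{int} K$. For each unit vector $v$, the set $\{t\ge 0: p+tv\in K\}$ is either the whole half-line $[0,\infty)$ (precisely when $v\in C(K)$) or a compact interval $[0,r_{K}(v)]$ with $r_{K}(v)>0$ depending continuously on $v$. The analogous radial picture holds for $M$ from $p$, with the same set of ``escaping'' directions because $C(K)=C(M)$. One then defines $f_{K}$ by a ray-by-ray reparametrisation: on each half-line from $p$ apply a homeomorphism of $[0,\infty)$ which is the identity if $v\in C(K)$ and which sends $r_{K}(v)$ to $r_{M}(v)$ otherwise, chosen so as to interpolate continuously as $v$ crosses $\partial C(K)$. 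In the ray case, I would show that above some threshold height $y_{0}$ the set $K$ equals $\{(x,y):\varphi_{-}(y)\le x\le\varphi_{+}(y)\}$ with $\varphi_{-}$ convex and $\varphi_{+}$ concave, straighten the two boundary curves $x=\varphi_{\pm}(y)$ to vertical lines by a horizontal reparametrisation of each horizontal slice, and then flatten the bounded cap of $K$ lying below height $y_{0}$ onto the positive $x$-axis to reach $M$.

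The main difficulty will be the gluing: in both cases the pointwise construction is straightforward, but one has to ensure that the resulting map is continuous at the transition between the bounded and unbounded parts of $K$ (along the directions of $\partial C(K)$ in the cone case, and across the threshold height in the ray case) and that it extends to a homeomorphism of all of $\mathbb{R}^{2}$, not merely of $K$. A collar argument, using that $\partial K$ is a proper topological arc in $\mathbb{R}^{2}$ and hence locally flat by the planar Schoenflies theorem, should close the gap.
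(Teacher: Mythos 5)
First, a point of reference: the paper does not prove this statement at all --- it is quoted from Bessaga and Pe{\l}czy\'{n}ski \cite[Chapter III, \S 6]{bespel}, where it is part of Klee's topological classification of closed convex sets. So there is no in-paper argument to compare yours with, and your proposal has to stand on its own. Its skeleton is sound and close in spirit to the classical proof: $C(K)\neq\{0\}$ because $K$ is closed, convex and unbounded, $C(K)$ is pointed because $K$ contains no line, the affine normalisation is legitimate, and the radial function $r_K(v)=1/g(v)$ (with $g$ the Minkowski gauge of $K-p$) is indeed continuous into $(0,\infty]$ and infinite exactly on $C(K)$, which is what makes a ray-by-ray comparison possible.

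There are, however, two genuine gaps. (1) In the cone case the interpolation you wave at is the whole point: the obvious choice $\phi_v(t)=t\,r_M(v)/r_K(v)$ on $[0,r_K(v)]$ does \emph{not} converge to the identity as $v$ approaches $\partial C(K)$, because $r_K(v)$ and $r_M(v)$ both blow up but their ratio need not tend to $1$. You must instead take $\phi_v$ equal to the identity on $[0,a(v)]$ with, say, $a(v)=\tfrac12\min\{r_K(v),r_M(v)\}$, affine from $[a(v),r_K(v)]$ onto $[a(v),r_M(v)]$, and a translation beyond; since $a(v)\to\infty$ at $\partial C(K)$, joint continuity and bijectivity follow. (Also, the base point $p$ must lie in the interior of the model, so the model has to be $p+[0,\infty)^2$ rather than $[0,\infty)^2$ itself; otherwise the ``radial picture for $M$ from $p$'' is meaningless.) (2) The ray case, as written, does not close. ``Flattening the bounded cap onto the positive $x$-axis'' is either a collapse of a two-dimensional set onto a segment (not a homeomorphism) or, read charitably, a deformation landing you on the half-strip $[0,1]\times[0,\infty)$ --- but that is not $M$, and no affine or radial map centred at a point can carry the half-strip onto the quadrant, since their recession cones differ. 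The missing step is an ambient homeomorphism of $\mathbb R^2$ taking the half-strip onto the quadrant, and producing it is exactly as hard as the Schoenflies-type statement you defer to your last sentence. In fact the cleanest repair subsumes both cases: $\partial K$ is a connected, properly embedded copy of $\mathbb R$ in $\mathbb R^2$, any two such lines are ambiently equivalent (one-point compactify and apply the planar Schoenflies theorem, adjusting so that $\infty$ is fixed), and $K$ is the closure of one complementary component; an ambient equivalence of boundaries, composed with a side swap if necessary, already yields the theorem without any radial or slice-wise construction.
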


Although some effort has been made to find an asymmetric version of the previous results (see \cite{todos,new,luis}), at the moment there is no explicit description in the mathematical literature of the (compact) convex sets of an asymmetric normed space. The aim of this work is to provide such a description in the two dimensional case.

After Corollary 11 in \cite{luis}, we know that only asymmetric normed spaces not satisfying  any separation axiom stronger  than $T_0$ provides a scenario that is different from  the classical one for finite dimensional normed spaces. This result establishes that a finite dimensional asymmetric normed space is normable if it is $T_1$, and so in this case the results on representation of compact convex sets are the same as those in normed spaces.

 As in \cite{new},  we restrict our attention to the case of asymmetric norms defined using lattice norms in $\mathbb{R}^2$, when the canonical order is considered.  We will give a geometric description of the compact convex sets in a two dimensional (real) asymmetric normed lattice.
More precisely, we will show that it is possible to obtain a concrete description of all the compact convex bodies (compact convex sets with a non-empty interior) in the  asymmetric space $(\mathbb R^2,q)$, where $q$ is an asymmetric lattice norm.

Finally, we will use the geometric structure of compact convex sets  to prove that every compact convex set in $(\mathbb R^{2}, q)$ is strongly $q$-compact (see definition below).

\section{Preliminaries}

The reader can find more information about asymmetric normed spaces in \cite{ale,todos,bo,cob,cobzas,bico,hous,dual,luis}). In this section we will only recall the basic notions about these spaces.

Consider a real linear space $X$ and let $\mathbb R^+$ be the set of non-negative real numbers. An \textit{asymmetric norm} $q$ on $X$ is a function  $q:X \rightarrow \mathbb R^+$ such that
\begin{enumerate}[\rm(1)]
\item $q(ax)=aq(x)$ for every $x \in X$ and $a \in \mathbb R^+$,
\item $q(x+y) \le q(x) + q(y)$, $x,y \in X$, and
\item for every $x \in X$, if $q(x)=q(-x)=0$, then $x=0$.
\end{enumerate}

The pair $(X,q)$ is called an \textit{asymmetric normed linear space}.
 The asymmetric norm defines a non-symmetric  topology on $X$ that is given by the open balls
$B_\varepsilon(x):=\{y \in X: \, q(y-x) < \epsilon\}$; this
topology is in fact the one given by the quasi-metric
$d_q(x,y):=q(y-x)$, $x,y \in X$.

For every asymmetric normed space $(X, q)$ the map $q^{s}:X\to \mathbb R^{+}$ defined by the rule $$q^s(x):= \max \{q(x),q(-x)\}, \quad x \in X,$$
is a norm that generates a topology stronger than the one generated by $q$. We will use the symbols $B_\varepsilon^q$ and $B_\varepsilon^{q^s}$ to distinguish the open unit balls of $(X,q)$ and $(X,q^s)$, respectively. In order to avoid confusion, we will say that a set is \textit{$q$-compact} ($q^{s}$-compact), if it is compact in the topology generated by $q$ ($q^{s}$).  We define the \textit{$q$-open} and \textit{$q$-closed} (\textit{$q^{s}$-open} and \textit{$q^{s}$-closed}) sets in the same way.

Since the topology is linear, the set $\theta_0:=\{x \in X:q(x)=0\}$ can be used to determine when the space is not Hausdorff. This set is also relevant for characterizing compactness, and has been systematically used in \cite{todos,luis}.

From  an abstract point of view, the knowledge  about the compact subsets of asymmetric normed linear spaces $(X,q)$ is satisfactory in  respect to their general structure (\cite{todos,new,luis}). The finite dimensional case is in fact better known (see \cite{new,luis}). In the case that the topology of $(X,q)$ is $T_1$, it is then  automatically Hausdorff --and so normed--- and its properties are thus completely known. A more interesting case results when the topology is as weak as possible, i.e. $T_0$.

The canonical example of a non-Hausdorff asymmetric normed space is when the asymmetric norm is given by a lattice norm (see the example that follows Corollary 22 in \cite{todos}).  Recall that a \textit{lattice norm} in the finite  dimensional space $\mathbb R^n$ with the coordinatewise order is a norm $\| \cdot\|$ that satisfies that $\|x\| \le \|y\|$ whenever $|x| \le |y|$, $x,y \in \mathbb R^n$. Such a norm defines an asymmetric norm $q$ by the formula
$$
q(x):= \| x \vee 0\|, \quad x \in \mathbb R^n,
$$
where $x \vee 0$ denotes the coordinatewise maximun of $x$ and $0$. An asymmetric norm defined in this way is called an \textit{asymmetric lattice norm}, and $(\mathbb R^{n}, q)$ is an \textit{asymmetric normed lattice}. A systematic study of asymmetric norms defined by means of a Banach lattice norm has been done in \cite{new} and some of the results found there are relevant for our paper. It is well known that all norms in a finite dimensional space are equivalent, and so all the asymmetric norms defined by means of this procedure  for different lattice norms are equivalent too.

These asymmetric  norms satisfy some stronger properties that are in general not true for other asymmetric norms in finite dimensional spaces. For instance, the asymmetric lattice norms are always \textit{right bounded} with constant $r=1$; that is, for every $\varepsilon >0$ and $x \in \mathbb R^n$,
$$B_\varepsilon^q(x) = B_\varepsilon^{q^s}(x) + \theta_0$$
 (see \cite[Definition 16]{luis} and   \cite[Lemma 1]{todos}).

A property that has been systematically explored in \cite{todos} for a $q$-compact set $K$ is the existence of a $q^s$-compact \textit{center} for $K$, i.e.,  a $q^s$-compact set $K_0$ such that $K_0 \subseteq K \subseteq K_0 + \theta_0$. Sets $K$ satisfying this property are called \textit{strongly $q$-compact}  (\cite{new}).

It is not difficult to see that a set $K$ is $q$-compact if and only if $K+\theta_0$ is $q$-compact. Since the topology generated by $q^{s}$ is finer than the one generated by $q$, it follows that every $q^{s}$-compact set is $q$-compact. This fact implies that every strongly $q$-compact set is $q$-compact. However, the converse implication is not always true. In fact, in \cite[Example 4.6]{new} we can find an example of a lattice norm $q$  in $\mathbb R^{2}$ and a $q$-compact  set $A\subset\mathbb R^{2}$ that is not strongly $q$-compact.
In this paper, we will show that  if we restrict our attention to the convex sets,  every $q$-compact convex set is   strongly $q$-compact.

We will use standard notation.
In the rest of this paper the letter $q$ will be used to denote an  asymmetric lattice norm in $\mathbb R^{2}$. Observe that in this case, the set $\theta_0$ coincides with the cone $(-\infty, 0]\times (-\infty, 0]$.
As usual, if $A$ is a set of $(\mathbb R^2,q)$, we write $A^o$ for the interior with respect to the asymmetric topology defined by $q$ and $\overline{A}$ for its closure. In case we want to
consider these definitions for the topology defined by $q^s$, we will write explicitly $A^{o^{q^s}}$ and $\overline{A}^{q^s}$.
Finally, we will say that a set $A\subset \mathbb R^{2}$ is a \textit{$q$-compact convex body}, if $A$ is $q$-compact, convex and its interior with respect to the asymmetric topology is non empty.
Note that $q$-convex bodies are not $q$-closed and that they may not be $q^{s}$-closed either.

\section{$q$-compact convex sets in $(\mathbb R^{2}, q)$}\label{s:ccs}

The main purpose of this section is to prove  that every $q$-compact convex set in $(\mathbb R^{2}, q)$  is strongly $q$-compact. We  will also introduce some notation and preparatory results that  will be used later to describe the geometric structure of $q$-compact convex bodies.

For the case $n=1$, let us consider the topology in $\mathbb R$ given by the asymmetric norm $| \cdot|_a:= \max\{ \cdot, 0 \}$.
We start with the following easy lemma.

\begin{lemma}  \label{lem2}

 The map $P_i:(\mathbb{R}^n,q) \to (\mathbb{R}, | ~\cdot~|_a)$ given by $P_i\big((x_1,\dots, x_n)\big)=x_i$ is continuous for every $i\in \{1,\dots, n\}$.

\end{lemma}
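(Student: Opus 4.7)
The plan is to prove the map is in fact Lipschitz with respect to the asymmetric (quasi)metrics, which immediately yields continuity. The key observation is that the asymmetric norm is built from a lattice norm via $q(x)=\|x\vee 0\|$, so extracting the positive part of a coordinate difference is controlled by $q$.

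First I would unpack what continuity at a point $x_0\in\mathbb{R}^n$ means in these asymmetric topologies: given $\varepsilon>0$, I need $\delta>0$ so that $q(x-x_0)<\delta$ forces $|P_i(x)-P_i(x_0)|_a=\max\{x_i-(x_0)_i,0\}<\varepsilon$. Writing $y=x-x_0$, this reduces to bounding $\max\{y_i,0\}$ in terms of $q(y)=\|y\vee 0\|$.

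Next I would exploit the lattice property. Since $\|\cdot\|$ is a (symmetric) norm on the finite dimensional space $\mathbb{R}^n$, the coordinate functional $y\mapsto y_i$ is bounded, so there exists a constant $C>0$ with $|z_i|\le C\|z\|$ for every $z\in\mathbb{R}^n$. Applying this to the nonnegative vector $z=y\vee 0$, whose $i$-th coordinate is exactly $\max\{y_i,0\}$, I get
\[
|P_i(x)-P_i(x_0)|_a=\max\{y_i,0\}=(y\vee 0)_i\le C\,\|y\vee 0\|=C\,q(x-x_0).
\]
Hence choosing $\delta=\varepsilon/C$ does the job, and $P_i$ is continuous (even $C$-Lipschitz) between the asymmetric spaces.

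The only mildly subtle point is to be sure we are working with the correct asymmetric target: the target norm $|\cdot|_a=\max\{\cdot,0\}$ is precisely the one-dimensional asymmetric lattice norm coming from the absolute value, so the inequality above really is the estimate $|P_i(x)-P_i(x_0)|_a\le C\,q(x-x_0)$ in the appropriate asymmetric metrics $d_{|\cdot|_a}$ and $d_q$. There is no real obstacle here; the whole content is the elementary coordinatewise bound together with the fact that a lattice norm dominates each nonnegative coordinate.
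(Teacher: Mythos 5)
Your proposal is correct and follows essentially the same route as the paper: both arguments reduce to applying the boundedness (equivalently, continuity) of the coordinate functional with respect to the symmetric norm $\|\cdot\|$ to the vector $(x-y)\vee 0$, whose $i$-th coordinate is exactly $\max\{x_i-y_i,0\}=|P_i(x)-P_i(y)|_a$. The only cosmetic difference is that you phrase it as an explicit Lipschitz estimate with constant $C$, whereas the paper runs the same computation in $\varepsilon$--$\delta$ form.
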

\begin{proof}
Recall that  $q(x)=\| x \vee 0\|$ for certain  norm  $\|\cdot \|$ in $\mathbb R^n$. Since the map $P_i:(\mathbb R^n,\|\cdot\|)\to (\mathbb R, |\cdot|)$  is continuous for every $i\in\{1,\dots ,n\}$, given $\varepsilon$ we can find $\delta>0$ such that $\|x\|<\delta$ implies that $|x_i|<\varepsilon$. Thus, if $q(x-y)<\delta$ then we get
$$\| (\max\{(x_1-y_1),0\},\dots ,\max\{(x_n-y_n),0\})\|=q(x-y)<\delta.$$ 
Now, by the choice of $\delta$ we infer that 
$$\varepsilon >|\max\{(x_i-y_i), 0\}|=\max\{(x_i-y_i), 0\}=|x_i-y_i|_a=|P_i(x)-P_i(y)|_a.$$
This last inequality implies the required result.
\end{proof}


Is not difficult to see that a set $A\subset \mathbb R$ is $|\cdot |_a$-compact if and only if it contains it supremum.
Therefore, if  $K$ is a compact convex set  in $(\mathbb{R}^2,q)$, then the sets $P_1(K)$ and $P_2(K)$ are compact convex sets in $(\mathbb R, |\cdot |_a)$ (by Lemma~ \ref{lem2}). Then we can define the
real numbers
$$
u:=\sup \{P_1((x,y)): (x,y) \in K \} \,\,\, \text{and} \,\,\, v:=\sup \{P_2((x,y)): (x,y) \in K \} .
$$
Observe that due to the  compactness of $K$, these constants are finite
and there are elements, such as $(u,y)$ and $(x,v)$, that belong to $K$.
This means that the sets appearing below are not empty and so
there exist  $(u,\beta)$ and $(\alpha,v)$ such that
$$
\alpha:=\sup \{P_1((x,v)): (x,v) \in K \} \,\,\, \text{and}
\,\,\,\beta:=\sup \{P_2((u,y)): (u,y) \in K \}.
$$
We claim that   $(u,\beta)$ and $(\alpha,v)$ belong to
$K$. Indeed, if $(u,\beta)$ is not in $K$ then family $$\mathcal U:=\{(-\infty,t)\times \mathbb R\mid t<u\}\cup\{\mathbb R\times (-\infty, s)\mid s<\beta\}$$ covers the set $K$. By Lemma~\ref{lem2},  $(-\infty,t)\times \mathbb R=P_1^{-1}\big((-\infty, t)\big)$ is $q$-open for every $t<u$. Similarly, $\mathbb R\times (-\infty, s)=P_2^{-1}\big((-\infty, s)\big)$ is $q$-open and therefore $\mathcal U$ is an open cover of $K$. By the definition of $\beta$, we infer that $\mathcal U$ does not admit a finite subcover. This contradicts the compactness of $K$ and then we can conclude that $(u,\beta)$ belongs to $K$. Analogously, we can prove that $(\alpha,v)$ lies in $K$.

\begin{lemma} \label{lempart}
Let $K \subseteq \mathbb R^2$ be a $q$-compact convex set such that $(\alpha,v) \neq (u,\beta)$.
Let
$L_1:=\{(\alpha, y): y \le v \}$ and $L_2:= \{(x,\beta): x \le u
\}$. If $K$ has non empty interior, then the convex hull $\text{co}(L_1 \cup L_2)$ is contained in $K$.

\end{lemma}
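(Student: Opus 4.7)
The plan is to show $L_1, L_2 \subseteq K$, after which convexity of $K$ immediately gives $\text{co}(L_1 \cup L_2) \subseteq K$. By the symmetry between the two coordinates in the construction of $u, v, \alpha, \beta$, it suffices to carry out the argument for $L_2$, i.e., to check that $(x, \beta) \in K$ for every $x \le u$.

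I first observe that the hypothesis $(\alpha, v) \ne (u, \beta)$ forces the strict inequalities $\alpha < u$ and $v > \beta$: if $\alpha = u$, then $(u, v) = (\alpha, v) \in K$ yields $\beta \ge v$, which combined with $\beta \le v$ gives $(\alpha, v) = (u, \beta)$, contradicting the hypothesis; the other inequality is symmetric. Next, I fix an interior point $p_0 = (a, b) \in K^o$. By right-boundedness, $B_\varepsilon^q(p_0) = p_0 + B_\varepsilon^{q^s}(0) + \theta_0 \subseteq K$ for some $\varepsilon > 0$, so every $q$-interior point $w$ already satisfies $w + \theta_0 \subseteq K$. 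A standard convex-combination computation (using $(1-t)\theta_0 = \theta_0$ and $(1-t) B_\varepsilon^{q^s}(0) = B_{(1-t)\varepsilon}^{q^s}(0)$) shows that whenever $r \in K$ and $t \in [0,1)$, the point $(1-t)p_0 + tr$ again lies in $K^o$, via
\[
(1-t)p_0 + tr + B_{(1-t)\varepsilon}^{q^s}(0) + \theta_0 = (1-t)\bigl(p_0 + B_\varepsilon^{q^s}(0) + \theta_0\bigr) + tr \subseteq (1-t)K + tr \subseteq K.
\]

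I then construct the required interior points in two stages. Because $v > \beta$, the interior point $p_0' := (1-s)p_0 + s(\alpha, v)$ has second coordinate $(1-s)b + sv \to v$ as $s \to 1^-$, so for $s$ close enough to $1$ its second coordinate $b'$ strictly exceeds $\beta$; fix such an $s$. For each $m \ge 1$ set $r_m := (1/m)p_0' + (1-1/m)(u, \beta)$; then $r_m \in K^o$, its first coordinate tends to $u$ as $m \to \infty$, and its second coordinate equals $\beta + (1/m)(b' - \beta) > \beta$. Given $x \le u$: if $x = u$ then $(u, \beta) \in K$ is already known, while if $x < u$ I choose $m$ so large that the first coordinate of $r_m$ is at least $x$, and then $(x, \beta) \le r_m$ componentwise places $(x, \beta) \in r_m + \theta_0 \subseteq K$. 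This gives $L_2 \subseteq K$, and $L_1 \subseteq K$ follows by the symmetric argument. The main obstacle I foresee is recognizing that the hypothesis is precisely the pair of strict inequalities $\alpha < u$, $v > \beta$; once that is clear, the $\theta_0$-quadrant built into every $q$-interior ball of a lattice asymmetric norm does the rest.
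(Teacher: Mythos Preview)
Your proof is correct and shares its core with the paper's argument: both establish $L_1, L_2 \subseteq K$ by exploiting that any $q$-interior point $p$ satisfies $p + \theta_0 \subseteq K$, together with the strict inequalities $\alpha < u$, $\beta < v$ and convexity. The executions differ slightly. The paper fixes an interior point, moves far down-left inside its $\theta_0$-quadrant, and then for a target $(\alpha, t) \in L_1$ draws the ray from that deep point through $(\alpha, t)$, arguing geometrically that this ray meets the segment $[(\alpha, v), (u, \beta)] \subseteq K$, so $(\alpha, t)$ is a convex combination of two points of $K$. You instead first record the standard fact that convex combinations $(1-t)p_0 + tr$ remain $q$-interior whenever $r \in K$ and $t \in [0,1)$, and then manufacture interior points $r_m$ approaching $(u, \beta)$ with second coordinate strictly above $\beta$ and first coordinate approaching $u$, so that each $(x, \beta)$ with $x < u$ lies in some $r_m + \theta_0 \subseteq K$. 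Your route avoids the ray-intersection geometry and is a bit more self-contained and explicit; the paper's route is shorter once one accepts the intersection picture without spelling out the choice of the deep point $(x_2, y_2)$.
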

\begin{proof} As the $q$-interior of $K$ is not empty, we can find an element
$(x_1,y_1)$ and an $\varepsilon >0$ such  that
$B_\varepsilon^q((x_1,y_1)) \subseteq K$. Since  $q$ is a lattice asymmetric norm, we find that there is a set
---unbounded with respect to the norm $q^s$--- that is included in
$B_\varepsilon((x_1,y_1))$; in fact, the set $(x_1,y_1) +
\theta_0$ is contained in this ball and so, also in $K$.

Consider the interval $[(\alpha,v),(u,\beta)]$. Since $K$ is convex, we have that this interval is included in $K$. Taking into account that it cannot be that $(\alpha,v)=(u,\beta)$, we have that $\alpha < u$ and $\beta < v$. The structure of the set $(x_1,y_1) + \theta_0$ makes it clear that there is an element $(x_0,y_0)$ in it that satisfies $(x_0,y_0) \le (s,t)$ for all
$(s,t) \in [(\alpha,v),(u,\beta)]$. Take now an element $(\alpha,t) \in L_1$, $t < v$. By the choice of $(x_0,y_0)$ we have that there is an element $(x_2,y_2) \in (x_0,y_0) + \theta_0 \subseteq (x_1,y_1) + \theta_0 \subseteq K$ such that the semiline
$$
\big\{(x_2,y_2)+ \lambda \big( (\alpha,t)-(x_2,y_2) \big): \lambda  \ge  0 \big\}
 $$
 cuts the interval  $[(\alpha,v),(u,\beta)]$ in a point $(s_0,t_0)$. Since $(\alpha,t)$ is a convex combination of $(x_2,y_2)$ and $(s_0,t_0)$ we have that $L_1$ is included in $K$.

Repeating the same argument for $L_2$, we obtain that it is also included in $K$, and since $K$ is convex we
obtain that $\text{co}( L_1 \cup L_2) \subseteq K$.

\end{proof}

Throughout the rest of this section, $K$ will always denote a $q$-compact convex subset in $(\mathbb R^{2}, q)$. In order to complete the geometric
information about the structure of $K$, let us define the following sets.
$$
\Delta_K:= \text{co}\big\{ (\alpha,\beta), (\alpha,v), (u,\beta) \big\},
$$
$$
S_K:=K \cap \big( \mathbb{R} \times [\beta,+\infty) \big) \cap \big( [\alpha,
+\infty) \times \mathbb R \big)
$$
and

$$R_K:=K\cap H,$$
where $H$ is the closed half upper plane  determined by the line containing  the segment $[(u,\beta), (\alpha, v)]$.
Observe that $R_K$ and $S_K$ are both convex sets contained in the rectangle $[\alpha, u]\times [\beta, v]$ and therefore, they are $q^{s}$-bounded.

Finally,  let us consider the set
\begin{equation*}
F_K:=\begin{cases} \partial_{q^{s}}\big(\overline{R_K}^{q^s}\big) \setminus ((u,\beta), (\alpha, v)),& \text{if }R_K\neq [(u,\beta), (\alpha, v)]\\
[(u,\beta), (\alpha, v)], &\text{if }R_K= [(u,\beta), (\alpha, v)]
\end{cases}
\end{equation*}
where $\overline{R_K}^{q^s}$ is the $q^s$-closure of $R_K$ and $\partial_{q^{s}}$ denotes the boundary with respect to the topology determined by $q^{s}$.
Since $R_K$ is convex, the set $F_K$ is a simple arc.
Furthermore, we have the following inclusions
 \begin{equation*}\label{e: contenciones}
 R_K\subset S_K\subset K.
\end{equation*}
 Additionally, since $K$ is convex and $F_K$ is an arc, it follows from  the definition of $(\alpha,v)$ and $(u,\beta)$,  that
$F_K$
cannot contain any horizontal or vertical non-trivial segment.

  If $K$ has non empty interior, by Lemma~\ref{lempart} we also have that  $\Delta_K \subseteq S_K$.

\begin{lemma} \label{inclufron}
 $F_K \subseteq K$.
\end{lemma}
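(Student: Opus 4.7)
The plan is to dispatch the two cases in the definition of $F_K$. If $R_K=[(u,\beta),(\alpha,v)]$ then by definition $F_K$ is this chord, which lies in $K$ by convexity together with $(u,\beta),(\alpha,v)\in K$. In the remaining case $F_K$ is the closed ``upper arc'' of $\overline{R_K}^{q^s}$ joining $(u,\beta)$ to $(\alpha,v)$; its endpoints already lie in $K$ by construction, so the task reduces to proving $p\in K$ for an arbitrary interior point $p$ of this arc.

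To produce $p$ from the inside, I would choose a sequence $(p_n)\subseteq R_K\subseteq K$ with $p_n\to p$ in $q^s$. Since $(\mathbb{R}^{2},q)$ is first-countable and $K$ is $q$-compact, $K$ is sequentially $q$-compact, so some subsequence $q$-converges to a point $p^{*}\in K$. A short calculation from $q(x)=\|x\vee 0\|$ shows that any $q$-cluster point of a $q^{s}$-convergent sequence $p_n\to p$ must satisfy $p^{*}\ge p$ coordinatewise. The chord $[(u,\beta),(\alpha,v)]$ has normal vector $(v-\beta,u-\alpha)$ with both components strictly positive, so from $p^{*}\ge p$ and the fact that $p$ is strictly above the chord (being an interior arc point) one concludes that $p^{*}$ lies strictly above the chord as well, whence $p^{*}\in K\cap H=R_K\subseteq\overline{R_K}^{q^s}$.

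The geometric heart is to show that $p$ is the only element of $\overline{R_K}^{q^s}$ coordinatewise dominating $p$, which then forces $p^{*}=p\in K$. I would parametrize the upper arc as the graph of the concave function $y^{\max}(x):=\sup\{y:(x,y)\in\overline{R_K}^{q^s}\}$ on $[\alpha,u]$, with $y^{\max}(\alpha)=v$ and $y^{\max}(u)=\beta$. Concavity combined with the observation (recorded just before the lemma) that $F_K$ contains no nontrivial horizontal segment forces $y^{\max}$ to be strictly decreasing on $(\alpha,u)$, while exclusion of vertical segments is automatic from $y^{\max}$ being a function. Then for $p=(x_{0},y_{0})$ with $y_{0}=y^{\max}(x_{0})$ and any $\vec v=(v_{1},v_{2})\in[0,\infty)^{2}\setminus\{0\}$, either $v_{1}>0$ and $y^{\max}(x_{0}+v_{1})<y_{0}\le y_{0}+v_{2}$, or $v_{1}=0<v_{2}$ and $y_{0}+v_{2}>y_{0}=y^{\max}(x_{0})$; in either case $p+\vec v$ lies strictly above the graph of $y^{\max}$ and therefore outside $\overline{R_K}^{q^s}$.

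I expect the main obstacle to be this last geometric step: converting the no-segment hypothesis plus concavity into strict monotonicity of $y^{\max}$, and then using the graphical description of the arc to exclude coordinatewise-larger points from $\overline{R_K}^{q^s}$. Everything else is a standard interplay between the finer topology $q^{s}$, which supplies the approximating sequence, and the coarser topology $q$, whose compactness of $K$ delivers the cluster point $p^{*}$.
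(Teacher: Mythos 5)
Your argument is correct, but it reaches the conclusion by a different mechanism than the paper. The paper argues by contradiction with an open cover: assuming $(x_1,y_1)\in F_K\setminus K$, it covers $K$ by the $q$-open sets $P_1^{-1}\big((-\infty,x_1-t)\big)$ and $P_2^{-1}\big((-\infty,y_1-t)\big)$ --- this family is a cover precisely because no point of $K$ coordinatewise dominates $(x_1,y_1)$, which is where the ``no horizontal or vertical segment'' property enters --- then extracts a finite subcover and obtains a $q^s$-open box around $(x_1,y_1)$ disjoint from $K$, contradicting $(x_1,y_1)\in\overline{K}^{q^s}$. You instead argue directly: approximate $p$ from inside $R_K$ in the $q^s$-topology, upgrade $q$-compactness of $K$ to sequential $q$-compactness via first countability (valid without any Hausdorff assumption) to extract a $q$-limit $p^*\in K$, note that $q$-convergence only controls positive parts so $p^*\ge p$, and then pin $p^*$ down to $p$ by the geometry of the arc. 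Both proofs ultimately rest on the same geometric fact --- an interior point of the arc is the only point of the relevant set that coordinatewise dominates it --- but you make this explicit through the strictly decreasing concave function $y^{\max}$ (correctly noting that in your second case $R_K$ is two-dimensional, so $F_K$ is exactly the graph of $y^{\max}$ and interior arc points lie strictly above the chord), whereas the paper leaves it implicit in the one-line assertion that the family is a cover. What your route buys is transparency at that geometric step, and the reduction $p^*\in K\cap H=R_K$ lets you verify domination only inside $\overline{R_K}^{q^s}$ rather than on all of $K$; what it costs is the extra (routine) topological ingredient about sequential compactness, which the cover argument avoids.
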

\begin{proof}
Let $(x_1,y_1) \in F_K \subseteq \overline{R_K}^{q^s}$ and assume that
it does not belong to $K$. Let $U_t:=(- \infty, x_1 -t) \times \mathbb
R$ and $V_t:=\mathbb R \times (-\infty, y_1 -t)$, $t >0$. Both of
them are $q$-open. Since $F_K$ does not contain either an horizontal
or a vertical segment, we have that the sets $\mathcal U:= \{U_t,
V_t \}_{t>0}$ constitute a $q$-open cover for $K$, and so there are $t_1, t_2$ such that $K
\subseteq U_{t_1} \cup V_{t_2}$.

Moreover,  $(x_1,y_1) \in (x_1-t_1, +\infty) \times
(y_1-t_2, + \infty)$, which is a $q^s$-open set that does not intersect $K$.
This is a contradiction with the fact that $(x_1,y_1)$ is in $\overline{R_K}^{q^s}\subset \overline{K}^{q^s}$.
\end{proof}

\begin{lemma} \label{S1compact}
The set $R_K$ is a
$q^s$-compact  convex set. Additionally, if $K$ has non empty interior, then $S_K$ is $q^{s}$-compact too.
\end{lemma}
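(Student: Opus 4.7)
The plan is to reduce both claims to showing $q^s$-closedness, since $(\mathbb{R}^2,q^s)$ is a finite-dimensional normed space where Heine--Borel applies, and the $q^s$-boundedness of $R_K$ and $S_K$ (both contained in $[\alpha,u]\times[\beta,v]$) has already been noted.

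For $R_K$, I would split into two cases. If $R_K = [(u,\beta),(\alpha,v)]$, the set is trivially $q^s$-closed and nothing more is needed. Otherwise $R_K$ contains a point strictly above the line $\ell$ through $(u,\beta)$ and $(\alpha,v)$, so by convexity $\overline{R_K}^{q^s}$ is a compact convex body in $(\mathbb{R}^2,q^s)$ with nonempty interior. The key geometric observation is that its topological boundary decomposes as $\partial_{q^s}\bigl(\overline{R_K}^{q^s}\bigr) = F_K \cup [(u,\beta),(\alpha,v)]$: the portion lying on $\ell$ is just $\overline{R_K}^{q^s}\cap \ell$, which is contained in $[\alpha,u]\times[\beta,v]\cap\ell = [(u,\beta),(\alpha,v)]$, while the complementary upper arc is $F_K$ by definition. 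Lemma \ref{inclufron} gives $F_K \subseteq K$, and convexity of $K$ (which contains both endpoints $(u,\beta)$ and $(\alpha,v)$) gives $[(u,\beta),(\alpha,v)] \subseteq K$. Using the elementary fact that every point of a planar convex body lies on a chord with endpoints on the boundary, convexity of $K$ then forces $\overline{R_K}^{q^s} \subseteq K$. Combined with $\overline{R_K}^{q^s} \subseteq H$ (since $H$ is $q^s$-closed), this yields $\overline{R_K}^{q^s} \subseteq K \cap H = R_K$, so $R_K$ is $q^s$-closed.

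For $S_K$, assuming $K$ has non-empty interior, I would establish the decomposition $S_K = \Delta_K \cup R_K$. The inclusion $\supseteq$ follows from $R_K \subseteq S_K$ together with $\Delta_K \subseteq K$ (Lemma \ref{lempart}) and $\Delta_K \subseteq [\alpha,+\infty)\times[\beta,+\infty)$. For $\subseteq$, any $p \in S_K$ lies in the rectangle $[\alpha,u]\times[\beta,v]$ (using that $u$ and $v$ are the suprema of $P_1,P_2$ on $K$); if $p \in H$ then $p \in K\cap H = R_K$, and otherwise $p$ lies strictly below $\ell$ within the rectangle, hence in the closed triangle $\Delta_K$. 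Since $\Delta_K$ is a Euclidean compact triangle and $R_K$ is $q^s$-compact by the first half, $S_K$ is a finite union of $q^s$-compact sets, hence $q^s$-compact. The main obstacle is the $R_K$ step: correctly identifying the $q^s$-boundary of $\overline{R_K}^{q^s}$ as $F_K\cup[(u,\beta),(\alpha,v)]$ so that Lemma \ref{inclufron} combined with convexity of $K$ can pull the entire $q^s$-closure back inside $K$; after that, the $S_K$ claim follows quickly from the geometric decomposition and Lemma \ref{lempart}.
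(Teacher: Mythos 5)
Your proof is correct, and it reaches the conclusion by a different formal mechanism than the paper, although both arguments rest on the same two ingredients: Lemma~\ref{inclufron} ($F_K\subseteq K$) and the observation that $\partial_{q^s}\bigl(\overline{R_K}^{q^s}\bigr)$ splits into $F_K$ and the diagonal segment $[(u,\beta),(\alpha,v)]$. The paper simply asserts the identities $R_K=\mathrm{co}(F_K)$ and $S_K=\mathrm{co}(F_K\cup\{(\alpha,\beta)\})$ and then invokes the theorem that the convex hull of a compact set in a finite-dimensional space is compact (Carath\'eodory, via \cite[Corollary 5.33]{Aliprantis Border}). You instead prove $q^s$-closedness directly: the boundary decomposition plus Lemma~\ref{inclufron} and convexity of $K$ pull $\overline{R_K}^{q^s}$ back into $K\cap H=R_K$ via the chord argument, and then Heine--Borel finishes since $R_K\subseteq[\alpha,u]\times[\beta,v]$; for $S_K$ you replace the convex-hull representation by the union $S_K=\Delta_K\cup R_K$ (justified by Lemma~\ref{lempart} and the fact that the part of the rectangle below the diagonal is exactly $\Delta_K$), so compactness follows from a finite union of compacta rather than from a hull operation. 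What your route buys is that it actually supplies the verification that the paper's opening sentence ``observe that $R_K$ coincides with $\mathrm{co}(F_K)$'' leaves implicit --- that verification is precisely the step where Lemma~\ref{inclufron} and the convexity of $K$ are needed --- at the cost of a slightly longer case analysis (degenerate segment versus genuine convex body, and $p\in H$ versus $p\notin H$ for the $S_K$ decomposition), all of which you handle correctly.
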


\begin{proof}

Observe that $R_K$ coincides with the convex hull $\text{co}(F_K)$.  Since $F_K$ is $q^s$-compact,  we can use a well-known result (see, e.g., \cite[Corollary 5.33]{Aliprantis Border}) to conclude that $R_K$ is $q^s$-compact too.
In the other hand, if $K$ has non empty interior, then $S_K=\text{co}(F_K\cup \{(\alpha,\beta)\})$. Using again \cite[Corollary 3.3]{Aliprantis Border}, we infer that $S_K$ is $q^s$-compact, as desired.

\end{proof}

\begin{theorem}\label{t:strong compact}
Every $q$-compact convex set $K$ in $(\mathbb R^{2},q)$ is strongly $q$-compact.
\end{theorem}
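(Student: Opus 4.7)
My plan is to take $K_0 := R_K$ itself as the $q^s$-compact center witnessing strong $q$-compactness of $K$. By Lemma~\ref{S1compact} the set $R_K$ is $q^s$-compact, and by definition $R_K = K \cap H \subseteq K$, so the only thing to prove is the inclusion $K \subseteq R_K + \theta_0$. Since $\theta_0 = (-\infty,0]\times(-\infty,0]$, this unwinds to the statement that every $(x,y) \in K$ is coordinatewise dominated by some point of $R_K$.

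I would first dispose of the degenerate configuration $(\alpha,v) = (u,\beta)$. Using the maximality built into the definitions of $u,v,\alpha,\beta$, this equality collapses to $\alpha = u$ and $\beta = v$, so $K \subseteq (-\infty,u]\times(-\infty,v]$ and $(u,v) \in K$; taking $K_0 := \{(u,v)\}$, which is trivially $q^s$-compact, then gives $K_0 \subseteq K \subseteq K_0 + \theta_0$. This same reduction handles the apparent semi-degenerate possibilities $\alpha = u$ or $\beta = v$ alone, since each of them forces the other.

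In the non-degenerate case $\alpha < u$ and $\beta < v$, let $(x,y) \in K$, so that $x \le u$ and $y \le v$. I would argue by three overlapping subcases. If $x \le \alpha$, then the endpoint $(\alpha,v) \in F_K \subseteq R_K$ coordinatewise dominates $(x,y)$. If $y \le \beta$, the symmetric endpoint $(u,\beta) \in R_K$ works. In the remaining subcase $\alpha < x$ and $\beta < y$, I would pass to $f(x) := \sup\{t : (x,t) \in K\}$ and use the chord $[(u,\beta),(\alpha,v)]$, which lies in $K$ by convexity, to observe that any $(x,t) \in K$ with $t \ge \ell(x)$ (the affine height of the chord above the abscissa $x$) lies in $H$ and hence in $R_K$; this yields $f(x) = \sup\{t : (x,t) \in R_K\}$. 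Since $R_K$ is $q^s$-compact and $\{x\}\times\mathbb R$ is $q^s$-closed, this supremum is attained by a point $(x,f(x)) \in R_K$, and $f(x) \ge y$ is automatic.

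The step I expect to be the main obstacle is the verification that the supremum $f(x)$ is actually realized by a point of $R_K$ and not merely approached as a limit, since $K$ itself need not be $q^s$-closed; this is where the $q^s$-compactness of $R_K$ furnished by Lemma~\ref{S1compact} is essential, in combination with the convexity argument identifying the relevant slice of $K$ with a slice of $R_K$. Once this step is secured, the three subcases jointly establish $K \subseteq R_K + \theta_0$, completing the proof of strong $q$-compactness.
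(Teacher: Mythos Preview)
Your proposal is correct and follows essentially the same strategy as the paper: take $K_0=R_K$, invoke Lemma~\ref{S1compact} for $q^s$-compactness, and verify $K\subseteq R_K+\theta_0$ by comparing coordinates against $\alpha,u,\beta,v$. The only difference is in the remaining case $\alpha<x$, $\beta<y$: the paper simply notes that $R_K+\theta_0$ is convex and contains $(\alpha,\beta),(\alpha,v),(u,\beta)$, hence swallows the triangle $\Delta_K$, whereas you locate an explicit dominating point $(x,f(x))$ via the $q^s$-compactness of the vertical slice $R_K\cap(\{x\}\times\mathbb R)$; both arguments are short and valid.
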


\begin{proof}
By Lemma~\ref{S1compact}, the set $R_K$ is $q^{s}$-compact and satisfies $R_K\subset K$.  Let us prove that $K\subset R_K+\theta_0$.
First observe that
$$K\setminus R_K\subset \Delta_K\cup\{(\alpha, v)+\theta_0, (u,\beta)+\theta_0\}.$$
On the other hand, since the points $(\alpha, v)$ and $(u,\beta)$ belong to $R_K$, it follows that $(\alpha, v)+\theta_0\subset R_K+\theta_0$ and $(u, \beta)+\theta_0\subset R_K+\theta_0$.
It only rests to prove that $\Delta_K\subset R_{K}+\theta_0$, but this is obvious since  $R_{K}+\theta_0$ is convex and the point
$(\alpha,\beta)\in (\alpha,v)+\theta_0\subset R_K+\theta_0$. Thus
$$\Delta_K=\text{co}\{(\alpha,\beta), (\alpha,v), (u,\beta)\}\subset R_K+\theta_0.$$
Now the proof is complete.
\end{proof}

\begin{remark}

In   \cite[Theorem 4.7]{new} it was proved that for the $q^s$-closed subsets of $\mathbb R^2$, the notions of $q$-compactness and strong $q$-compactness coincide. However, as mentioned in Section 2, even in $\mathbb R^{2}$ there are examples of (non $q^{s}$-closed) $q$-compact sets that are not strongly $q$-compact.
Since each strongly $q$-compact set is $q$-compact (see \cite[Proposition 11]{todos}), our result implies that for the case of convex sets this result is also true, even if it is not $q^s$-closed.
\end{remark}

\section{The structure of $q$-convex bodies}

For the rest of the paper, the letter  $K$ will always denote  a $q$-compact convex body.
Let us also recall that, if $x,y \in \mathbb R^2$, $\{x,y]$ means either the closed interval $[x,y]$ or the left open interval $(x,y]$.

\begin{remark} \label{rem1point} Let $K\subset \mathbb R^{2}$ be a $q$-compact convex body. Consider the corresponding points $(\alpha,v)$ and $(u,\beta)$ computed  just as in Section~\ref{s:ccs}.
Note that $\beta=v$ if and only if  $\alpha=u$. In this case, we
have that $F_K= \{(u,v) \} \subseteq K$, which implies
necessarily that
$$
K \subseteq (u,v) + \theta_0  \subseteq \overline{K}^{q^s}.
$$
For instance, a set such as $((x_0,y_0) + (\theta_0)^o)\cup\{(x_0,y_0)\}$, where $(\theta_0)^o$ denotes the $q$-interior of
$\theta_0$, is a $q$-compact set; this is easy to verify. This implies that for the case $\beta=v$, the $q$-compact convex body $K$ is necessarily
$$
K=  \{(u,t_0),(u,v)] \cup \{(s_0,v),(u,v)] \cup ((u,v)+(\theta_0)^o),
$$
for some $-\infty \le t_0  \le v$ and $-\infty \le s_0 \le u$.
Such a set may not be homeomorphic to the asymmetric $q^s$-closed unit ball $\overline{B_1^q(0)}^{q^s}$ in
$\mathbb R^2$, since in this case the natural
identification of the boundaries from $K$ to $\overline{B_1^q(0)}^{q^s}$ may not be
surjective (for example if $t_0=v$ and $s_0=u$).
\end{remark}

Everything is now prepared to prove the main result of this section.

\begin{theorem} \label{main}
Let $K$ be a subset of the asymmetric Euclidean space $(\mathbb R^2,q)$. The following statements are equivalent.
\vspace{3mm}
\begin{itemize}
\item[(i)]
$K$ is a $q$-compact convex body.
\vspace{3mm}
\item[(ii)]

\begin{itemize}
\item[(1)] Either there is a point $(u,v) \in \mathbb R^2$ such that
$$
K=  \{(u,t_0),(u,v)] \cup \{(s_0,v),(u,v)] \cup \big( (u,v)+ (\theta_0)^o \big)
$$

for some $-\infty \le t_0  \le v$ and $-\infty \le s_0 \le u$,
\vspace{3mm}
\item[(2)]
or there are real scalars $s_0\leq \alpha < u$, $t_0\leq \beta < v$ and a $q^s$-compact convex set $K_0$ satisfying that
$$
\text{co} \{(\alpha,v),(\alpha,\beta),(u,\beta) \} \subseteq K_0 \subseteq \text{co} \{(\alpha,v),(\alpha,\beta),(u,\beta), (u,v) \},
$$
$$K_0\subset K\subset K_0+\theta_0$$
and such that
$$
K= K_0 \cup \big( (\alpha,v) + (\theta_0)^o \big) \cup \big( (u,\beta) + (\theta_0)^o \big) \cup \{(u,t_0),(u,\beta)] \cup \{(s_0,v),(\alpha,v)].
$$

\end{itemize}

\end{itemize}
\end{theorem}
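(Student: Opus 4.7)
The plan is to prove the equivalence by treating the two directions in turn, splitting into cases according to whether $(\alpha,v)=(u,\beta)$ (case (1)) or $(\alpha,v)\neq(u,\beta)$ (case (2)), where the scalars $u,v,\alpha,\beta$ are those associated to $K$ as in Section~\ref{s:ccs}.

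For the implication $(i)\Rightarrow (ii)$ in the degenerate case $\alpha=u$, $\beta=v$, Remark~\ref{rem1point} already places $K$ inside $(u,v)+\theta_0$. The tails $\{(u,t_0),(u,v)]$ and $\{(s_0,v),(u,v)]$ would then be obtained by applying the one-dimensional description recalled in the introduction to $K\cap(\{u\}\times\mathbb{R})$ and $K\cap(\mathbb{R}\times\{v\})$, which carry the asymmetric topology of $(\mathbb{R},|\cdot|_a)$ via Lemma~\ref{lem2}. The open quadrant $(u,v)+(\theta_0)^o$ is forced into $K$ by convexity, using that the non-empty $q$-interior of $K$ contains a translate $(x_1,y_1)+\theta_0$ and taking convex combinations with $(u,v)$ whose parameter is pushed toward $1$, thereby driving the free endpoint to $-\infty$ in both coordinates. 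For the non-degenerate case I would set $K_0:=S_K$: Lemma~\ref{S1compact} gives $q^s$-compactness and convexity, Lemma~\ref{lempart} yields $\Delta_K\subseteq K_0$, the containment $K_0\subseteq\mathrm{co}\{(\alpha,v),(\alpha,\beta),(u,\beta),(u,v)\}$ holds by definition, and Theorem~\ref{t:strong compact} provides the sandwich $K_0\subseteq K\subseteq K_0+\theta_0$. The explicit decomposition of $K\setminus K_0$ then follows from a case analysis on $(x,y)\in K\setminus K_0$ according to which of $x<\alpha$ or $y<\beta$ holds, combined with the same convexity trick producing $(\alpha,v)+(\theta_0)^o\subseteq K$ and $(u,\beta)+(\theta_0)^o\subseteq K$, and with the tails coming again from the one-dimensional analysis.

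For the reverse implication $(ii)\Rightarrow (i)$, I would verify convexity, non-empty $q$-interior, and $q$-compactness. Non-empty interior is immediate because $(u,v)+(\theta_0)^o$ or $(\alpha,v)+(\theta_0)^o$ is $q$-open by Lemma~\ref{lem2}. For $q$-compactness, the decisive fact is that any $q$-open neighbourhood of a point $p$ automatically contains $p+\theta_0$, since $B_\varepsilon^q(p)=B_\varepsilon^{q^s}(p)+\theta_0$ by right boundedness; therefore any $q$-open cover of $K$ can be refined to a finite subcover of the $q^s$-compact set $K_0$ (respectively of $\{(u,v)\}$), and that same finite family automatically swallows every tail and every open quadrant, because each such piece is contained in $p+\theta_0$ for a corner $p\in\{(\alpha,v),(u,\beta)\}$ (or $p=(u,v)$) already covered. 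Convexity is checked by examining the segment between two arbitrary points of $K$; the non-trivial situations occur when one endpoint lies in $K_0$ and the other in one of the open quadrants, in which case one shows the segment meets the lines $x=\alpha$ and $y=\beta$ at points belonging to the edges of the triangle $\mathrm{co}\{(\alpha,v),(\alpha,\beta),(u,\beta)\}\subseteq K_0$, so the segment splits into sub-segments each lying in a single convex piece of $K$.

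The main obstacle I anticipate is the convexity check in case (2) when an endpoint of the segment lies in the intersection $((\alpha,v)+(\theta_0)^o)\cap((u,\beta)+(\theta_0)^o)$, since then the segment can cross both threshold lines $x=\alpha$ and $y=\beta$ and visit up to three distinct regions of $K$; a careful ordering of the two crossing parameters is required, and the argument must exploit that the boundary of the core rectangle meeting $K_0$ is fully contained in $K_0$ through the triangle. By contrast, the $q$-compactness step should be clean once the right-boundedness identity is invoked, and the forward direction reduces essentially to bookkeeping after the machinery of Section~\ref{s:ccs} is applied.
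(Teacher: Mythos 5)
Your proposal is correct and follows essentially the same route as the paper: the forward direction takes $K_0=S_K$ and assembles the decomposition from Lemma~\ref{lempart}, Lemma~\ref{S1compact} and Theorem~\ref{t:strong compact}, while the reverse direction reduces $q$-compactness to the $q^s$-compactness of $K_0$ (resp.\ of $\{(u,v)\}$) via right boundedness, exactly as in the paper's proof of strong $q$-compactness implying $q$-compactness. The one place you go beyond the paper is the explicit convexity check in (ii)$\Rightarrow$(i), which the paper leaves implicit; your crossing-point analysis through the edges of the triangle $\mathrm{co}\{(\alpha,v),(\alpha,\beta),(u,\beta)\}$ is the right way to settle it (and note that the tail intervals follow already from convexity of $K$ together with $(u,\beta),(\alpha,v)\in K$ and the definitions of $t_0,s_0$, so no appeal to compactness of $K\cap(\{u\}\times\mathbb{R})$ is needed).
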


\begin{proof}
(i) $\Rightarrow$ (ii)
Take a $q$-compact convex body $K$ and compute the numbers $(\alpha,v)$ and $(u,\beta)$ as explained in Section~\ref{s:ccs}.
 By Theorem~\ref{t:strong compact}, we have that
$$S_K\subset K\subset R_K+\theta_0\subset S_K+\theta_0.$$
Moreover, the proof of Lemma~\ref{lempart}
gives that
$$
K_1:=S_K \cup
\big( (\alpha,v) + (\theta_0)^o \big) \cup \big( (u,\beta) + (\theta_0)^o \big) \subseteq K\subset S_K+\theta_0.
$$
Defining $s_0=\inf\{s\mid (s,v)\in K\}$ and $t_0=\inf\{t\mid (u,t)\in K\}$  (allowing $s_0=-\infty$ and $t_0=-\infty$, if that is the case), we then have:
\begin{equation}\label{e:Kigual K1}
K=K_1\cup \{(s_0, v), (\alpha, v)]\cup\{(u,t_0), (u,\beta)].
\end{equation}
Now, if $(\alpha, v)=(u,\beta)$, then $K_1=(u,v)+(\theta_0)^o$ and therefore:
$$K=\{(s_0, v), (u, v)]\cup\{(u,t_0), (u,v)]\cup ((u,v)+(\theta_0)^o).$$
If is not the case, let $K_0=S_K$.  By Lemma~\ref{S1compact}, $S_K$ is $q^{s}$-compact and obviously  satisfies
 $$\text{co} \{(\alpha,v),(\alpha,\beta),(u,\beta) \} \subseteq S_K \subseteq \text{co} \{(\alpha,v),(\alpha,\beta),(u,\beta), (u,v) \}.$$
 Finally, just use the definition of $K_1$ in combination with equality (\ref{e:Kigual K1}) to conclude that 
$$ K=S_K \cup \big( (\alpha,v) + (\theta_0)^o \big) \cup \big( (u,\beta) + (\theta_0)^o \big) \cup \{(u,t_0),(u,\beta)] \cup \{(s_0,v),(\alpha,v)].$$

This proves the first implication.

(ii) $\Rightarrow$  (i)  It follows from the structure of $K$ that it has non empty interior.
If the set $K$ is as in (1), it is obviously $q$-compact since each open set containing the point $(u,v)$ contains the whole set $K$.
If the set $K$ is as in (2), then it is strongly $q$-compact and thus also $q$-compact. Now the proof is complete.
\end{proof}

By Theorem~\ref{t:convexos dimension 2} we know that the $q^{s}$-closure of a $q$-compact convex body $K\subset \mathbb R^{2}$ is $q^{s}$-homeomorphic to $\overline{B^{q}_{1}(0)}^{q^s}$.
Using the geometric description of the $q$-convex bodies of the asymmetric space $(\mathbb R^{2},q)$ given in Theorem \ref{main}   we get the following

\begin{corollary}
Let $K$ be a $q$-compact convex body. It is  then $q^s$-homeomorphic to a convex set $K'
\subseteq \overline{B^{q}_{1}(0)}^{q^s}$  that can be written as
$$
K' = B^{q}_{1}(0)\cup C
$$
where $C$ is a connected subset of the $q^s$-boundary of $\overline{B^{q}_{1}(0)}^{q^s}$.
\end{corollary}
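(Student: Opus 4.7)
My plan is to apply Theorem~\ref{t:convexos dimension 2} to the $q^s$-closures of $K$ and of $B^q_1(0)$, combined with the structural description in Theorem~\ref{main}. First I would verify that $\overline{K}^{q^s}$ and $\overline{B^{q}_{1}(0)}^{q^s}$ satisfy the hypotheses of Theorem~\ref{t:convexos dimension 2}: each is a $q^s$-closed convex subset of $\mathbb R^2$ with non-empty $q^s$-interior (since the $q$-interior of $K$ is non-empty and the $q^s$-topology is finer than the $q$-one; $0\in B^q_1(0)$), each is $q^s$-unbounded (it contains a translate of $\theta_0$ by right-boundedness), and neither contains a line (by Theorem~\ref{t:strong compact}, $\overline{K}^{q^s}\subseteq K_0+\theta_0$ for a $q^s$-compact $K_0$, and analogously $\overline{B^{q}_{1}(0)}^{q^s}=\overline{B^{q^s}_{1}(0)}^{q^s}+\theta_0$, while $\theta_0$ lies in the closed third quadrant). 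Theorem~\ref{t:convexos dimension 2} then supplies an ambient homeomorphism $f:\mathbb R^2\to\mathbb R^2$ with $f(\overline{K}^{q^s})=\overline{B^{q}_{1}(0)}^{q^s}$ sending interior to interior and boundary to boundary.

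Setting $K':=f(K)$, the restriction $f|_K$ is a $q^s$-homeomorphism. Because $K$ is convex with non-empty $q^s$-interior, the standard identity $K^{o^{q^s}}=(\overline{K}^{q^s})^{o^{q^s}}$ holds, so $f$ carries $K^{o^{q^s}}$ onto $(\overline{B^{q}_{1}(0)}^{q^s})^{o^{q^s}}=B^q_1(0)$. Hence $B^q_1(0)\subseteq K'\subseteq \overline{B^{q}_{1}(0)}^{q^s}$ and $K'=B^q_1(0)\cup C$, where $C:=K'\setminus B^q_1(0)\subseteq \partial_{q^{s}}(\overline{B^{q}_{1}(0)}^{q^s})$. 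Connectedness of $C=f(K\setminus K^{o^{q^s}})$ reduces via $f$ to that of $K\setminus K^{o^{q^s}}$, which by Theorem~\ref{main} is either two half-open intervals meeting at $(u,v)$ (case~(1)), or the arc $F_K$ joined to the half-open intervals $\{(u,t_0),(u,\beta)]$ and $\{(s_0,v),(\alpha,v)]$ through the common endpoints $(u,\beta)$ and $(\alpha,v)$ (case~(2)); both yield a path-connected set.

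The main obstacle will be ensuring that $K'$ is itself convex, since an arbitrary plane homeomorphism need not preserve convex combinations. To handle this I would either refine the choice of $f$ so that the two flat boundary rays of $\overline{K}^{q^s}$ emanating from $(\alpha,v)$ and $(u,\beta)$ are carried onto the corresponding flat rays of $\partial_{q^{s}}(\overline{B^{q}_{1}(0)}^{q^s})$ in an order-preserving way --- so that $C$ meets each flat portion in an interval and hence any segment between two points of $C$ either crosses $B^q_1(0)$ or stays within a single flat ray, yielding convexity of $K'$ --- or, alternatively, construct $K'$ inside $\overline{B^{q}_{1}(0)}^{q^s}$ directly by applying Theorem~\ref{main}(ii) with parameters transferred from $K$, obtaining a $q$-compact convex body whose convexity is automatic from Theorem~\ref{main}, and then build the $q^s$-homeomorphism $K\cong_{q^s}K'$ as a final step by adjusting $f$ with a self-homeomorphism of $\overline{B^{q}_{1}(0)}^{q^s}$ that realigns $f(K\setminus K^{o^{q^s}})$ with $C$ along the boundary.
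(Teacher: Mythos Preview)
Your plan is essentially the paper's own argument: the paper states the corollary immediately after Theorem~\ref{main}, noting only that Theorem~\ref{t:convexos dimension 2} gives the homeomorphism of closures and that the structural description in Theorem~\ref{main} does the rest, with no further details. Your first two paragraphs carry this out carefully, and your verification of the hypotheses of Theorem~\ref{t:convexos dimension 2} and the identity $K^{o^{q^s}}=(\overline{K}^{q^s})^{o^{q^s}}$ is correct.

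You go beyond the paper in flagging the convexity of $K'$ as a genuine issue: an arbitrary ambient homeomorphism need not send $K$ to a convex set, and the paper's one-line justification does not address this. Your second proposed fix---build $K'$ directly inside $\overline{B^{q}_{1}(0)}^{q^s}$ from the data $(s_0,\alpha,u,t_0,\beta,v)$ supplied by Theorem~\ref{main}, so that convexity of $K'$ is automatic, and then produce the $q^s$-homeomorphism $K\cong K'$ by matching boundary pieces---is the cleanest route and is almost certainly what the authors have in mind; the first fix (realigning the two flat rays under $f$) also works but requires more bookkeeping. Either way, your outline is sound and in fact more complete than the paper's.
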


\textbf{Final Remark.}

 It is well known that convex sets in $\mathbb R^{2}$  satisfy properties that are not generally true for convex sets in higher dimensions. The same happens in the asymmetric case. For instance, Theorem~\ref{t:strong compact}, is no longer true in $n$-dimensional asymmetric normed lattices if $n>2$. An example of this situation is explained below.

\begin{example}\label{e:dimension 3}
Consider the asymmetric normed lattice  $(\mathbb R^{3}, q)$, where $q:\mathbb R^{3}\to[0,\infty)$ is the asymmetric lattice norm defined by the rule:
$$q(x)=\max\{\max\{x_i, 0\}\mid i=1,2,3\}\quad x=(x_1,x_2,x_3)\in \mathbb R^{3}.$$
Let $K=\text{co} (A\cup\{(0,0,0),(0,1,1)\})$ where $A$ is the set defined as 
$$A=\{(x_1,0,x_3)\mid x_1^{2}+x^{2}_3=1,\,x_1\in (0,1],\,x_3\geq 0 \}.$$
For any $q$-open cover $\mathcal U$ of $K$, there exists an element $U\in \mathcal U$ such that $(0,1,1)\in U$.  By \cite[Lemma 4]{luis}, this implies that 
$$(0,1,1)+\theta_0\subset U+\theta_0=U,$$
and therefore the cover $\mathcal U$ is a $q$-open (and $q^{s}$-open) cover  for $\overline {K}^{q^s}$ which  is $q^{s}$-compact. Thus, we can extract a finite subcover $\mathcal{V}\subset\mathcal U$ for   $\overline {K}^{q^s}$. This cover $\mathcal V$ is a finite subcover for $K$ too, and then we can conclude that $K$ is $q$-compact.

To finish this example, let us note that $K$ is not strongly $q$-compact. For this, simply observe that any set $K_0$ satisfying $K_0\subset K\subset K_0+\theta_0$, must contain the set $A$. If $K_0$ is additionally $q^{s}$-compact, then it is $q^{s}$-closed too and then
$$\overline{A}^{q^s}\subset \overline{K_0}^{q^s}=K_0\subset K,$$
which is impossible. 
\end{example}

Since the situation in higher dimensions changes drastically from the $2$-dimensional case,  we decided to work this case separately in this note. 
The geometric structure of finite dimensional $q$-compact convex set in an asymmetric normed space, can be characterized in terms of its extreme points and the convex hull of these extreme points somehow determines the compactness of the set. To see these and other results,  the reader can consult \cite{Jonard Sanchez}.

\textit{The authors are very grateful to the referees since their suggestions, remarks and comments have permitted a substantial improvement of the first version of the paper.}


\vspace{2cm}

\noindent[Natalia Jonard-P\'erez]  Departamento de Matem\'aticas,
Facultad de Matem\'aticas,
Campus Espinardo,
30100 Murcia, Spain, e-mail: nataliajonard@gmail.com

\vspace{1cm}

\medskip

\noindent[Enrique A. S\'anchez P\'erez] Instituto Universitario de Matem\'{a}tica Pura y Aplicada, Universitat Polit\`ecnica de Val\`encia, Camino de Vera s/n, 46022 Valencia, Spain, e-mail: easancpe@mat.upv.es


\begin{thebibliography}{99}





\bibitem{ale}  C. Alegre, J. Ferrer, and V.  Gregori, \textit{On the
Hahn-Banach theorem in certain linear quasi-uniform structures},
Acta Math. Hungar. 82 (1999), 315-320.

\bibitem{todos}  C. Alegre,  I. Ferrando, L. M.  Garc\'{\i}a Raffi,
and E. A. S\'anchez P\'erez,  \textit{Compactness in asymmetric
normed spaces}, Topology Appl. 155 (2008), 527-539.

\bibitem{Aliprantis Border} C. D. Aliprantis and K. Border, \emph{Infinite Dimensional Analysis. A Hitchiker's Guide}, 3th, edition, Springer, Berlin Heidelberg, 2006.

\bibitem{bespel}
C. Bessaga, and A. Pe{\l}czy\'{n}ski,
\textit{Selected topics in infinite-dimensional topology}.
PWN - Polish Scientific Publishers. Warszawa. 1975.


\bibitem{bo} P. A. Borodin,  \textit{The Banach-Mazur theorem for
spaces with asymmetric norm and its applications in convex
analysis}, Mathematical Notes, 69  (3) (2001), 298-305.

\bibitem{cob} S.
Cobza\c{s}, \textit{Separation of convex sets and best
approximation in spaces with asymmetric norm}, Quaest. Math. 27, 3
(2004), 275-296.


\bibitem{cobzas}  S. Cobza\c{s}, \textit{Functional analysis in asymmetric normed spaces}, Birkh\"auser. Basel. 2013.


\bibitem{new}
  J. J. Conradie  and M. D.
Mabula,  \textit{
Completeness, precompactness and compactness in
finite-dimensional asymmetrically normed lattices},
Topology Appl. 160 (2013), 2012-2024.


\bibitem{gar4}  L. M. Garc\'{\i }a-Raffi, S. Romaguera,  and
 E. A. S\'{a}nchez-P\'{e}rez, \textit{Sequence spaces and asymmetric
norms in the theory of computational complexity}, Math. Comp.
Model. 36 (2002), 1-11.

\bibitem{bico} L. M. Garc\'{\i }a Raffi, S. Romaguera,  and E. A.
S\'{a}nchez P\'{e}rez,  \textit{The bicompletion of an
asymmetric normed linear space}, Acta Math. Hungar. 97 (3) (2002),
183-191.


\bibitem{hous}  L. M. Garc\'{\i }a Raffi, S. Romaguera,  and E. A. S\'{a}nchez
P\'{e}rez,  \textit{On Hausdorff asymmetric normed linear
spaces}, Houston J. Math., 29 (2003), 717-728.


\bibitem{dual}  L. M. Garc\'{\i }a Raffi, S. Romaguera,  and
E. A. S\'{a}nchez-P\'{e}rez, \textit{The dual space of an
asymmetric normed linear space}, Quaest. Math. 26 (2003),
83-96.

\bibitem{luis} L. M.
Garc\'{\i }a Raffi,  \textit{Compactness and finite dimension
in asymmetric normed linear spaces}, Topology Appl. 153 (2005),
844-853.


\bibitem{Jonard Sanchez} N. Jonard-P\'erez and E. A. S\'anchez-P\'erez, \emph{Extreme points and geometric aspects of compact convex sets in asymmetric normed spaces}, http://arxiv.org/abs/1404.0500 (preprint).

\bibitem{kot}
G. K\"{o}the, \textit{Topological Vector Spaces I}, Springer.
Berlin. 1969.

\bibitem{lint}
J. Lindenstrauss and L. Tzafriri, \textit{Classical Banach Spaces
II}, Springer. Berlin. 1996.

\bibitem{ro2} S. Romaguera and M. Schellekens,  \textit{Quasi-metric properties on complexity
spaces}, Topology Appl. 98 (1999), 311-322.

\bibitem{ro1} S. Romaguera and  M. Sanchis,  \textit{Semi-Lipschitz functions
and best approximation in quasi-metric spaces}, J. Approx. Th. 102
(2000), 292-301.



\bibitem{ro3} S. Romaguera  and M. Sanchis,  \textit{Duality and quasi-normability for complexity
spaces}, Appl. Gen. Topology 3 (2002), 91-112.


\end{thebibliography}
\end{document}